%%%%%%%%%%%%%%%%%%%%%%%%%%%%%%%%%%%%%%%%%%%%%%%%%%%%%%%%%%%%%%%%%%%%%%%%%%%%%%%%
%2345678901234567890123456789012345678901234567890123456789012345678901234567890
%        1         2         3         4         5         6         7         8

\documentclass[letterpaper, 10 pt, journal, twoside]{ieeetran}  % Comment this line out
% if you need a4paper
%\documentclass[a4paper, 10pt, conference]{ieeeconf}      % Use this line for a4
% paper

\IEEEoverridecommandlockouts                              % This command is only
% needed if you want to
% use the \thanks command

%\overrideIEEEmargins
% See the \addtolength command later in the file to balance the column lengths
% on the last page of the document
\usepackage[english]{babel}

\usepackage{amsmath,amssymb,amsthm} % assumes amsmath package installed

\newtheoremstyle{theoremdd}% name of the style to be used
{\topsep}% measure of space to leave above the theorem. E.g.: 3pt
{\topsep}% measure of space to leave below the theorem. E.g.: 3pt
{\itshape}% name of font to use in the body of the theorem
{0pt}% measure of space to indent
{\bfseries}% name of head font
{:}% punctuation between head and body
{ }% space after theorem head; " " = normal interword space
{\thmname{#1}\thmnumber{ #2}\thmnote{ (#3)}}

\theoremstyle{theoremdd}

\newtheorem{theorem}{Theorem}[section]

\newtheorem{lemma}[theorem]{Lemma}

\usepackage{chngcntr}
\usepackage{apptools}
\AtAppendix{\counterwithin{lemma}{section}}

%\AtAppendix{\renewtheorem{definition}{Definition}[section]
%	\renewtheorem{proposition}[definition]{Proposition}
%	\renewtheorem{lemma}[definition]{Lemma}}

%\usepackage{verbatim}
% The following packages can be found on http:\\www.ctan.org
%\usepackage{graphics} % for pdf, bitmapped graphics files
%\usepackage{epsfig} % for postscript graphics files
%\usepackage{mathptmx} % assumes new font selection scheme installed
%\usepackage{times} % assumes new font selection scheme installed

\usepackage{mathrsfs}
\usepackage{graphicx} % support the \includegraphics command and options
\usepackage{epsfig} % for postscript graphics files% \usepackage{epstopdf}
\usepackage{epstopdf}
\usepackage{color}
\usepackage{caption}
\usepackage{float}
\usepackage{mathtools}

\usepackage{cite}
\usepackage[ruled,vlined]{algorithm2e}

\usepackage{tikz}
\usetikzlibrary{shapes,arrows,positioning,calc,chains}

\makeatletter
\DeclareRobustCommand{\rvdots}{%
	\vbox{
		\baselineskip4\p@\lineskiplimit\z@
		\kern-\p@
		\hbox{.}\hbox{.}\hbox{.}
}}
\makeatother

\usepackage{lipsum}
\makeatletter
\def\footnoterule{\relax%
	\kern-5pt
	\hbox to \columnwidth{\hfill\vrule width .9\columnwidth height 0.4pt\hfill}
	\kern4.6pt}
\makeatother

% \usepackage{kpfonts}
%\usepackage{ifsym}
%\captionsetup{singlelinecheck=false}
\captionsetup{font={small}}
\makeatletter
\newcommand{\figcaption}{\def\@captype{figure}\caption}
\newcommand{\tabcaption}{\def\@captype{table}\caption}

\usepackage{color,hyperref}
\definecolor{darkblue}{rgb}{0.0,0.0,0.3}
\hypersetup{colorlinks,breaklinks,linkcolor=darkblue,urlcolor=darkblue,anchorcolor=darkblue,citecolor=darkblue}

\makeatother
\title{\LARGE \bf Necessary and Sufficient Conditions for \\Frequency-Based Kelly Optimal Portfolio
}

\author{\large Chung-Han Hsieh,$^{*}$ \textit{Member, IEEE} % <-this % stops a space
	\thanks{\hskip -10pt ${}^*$Chung-Han Hsieh is with the Department of Electrical and Computer Engineering, University of Wisconsin, Madison, WI 53706, USA. E-mail: \href{mailto: chunghan.hsieh@wisc.edu}{chunghan.hsieh@wisc.edu}. Parts of this paper is based on the preliminary work given in \cite{Hsieh_Barmish_Gubner_2018_ACC} and \cite{Hsieh_Gubner_Barmish_2018_CDC}.} 
%	\thanks{\hskip -10pt ${}^{**}$B. Ross Barmish is a Research Professor in the 
%		Department of Elec\-tric\-al and Computer  Engineering, Boston University, Boston, MA\ \ 02215.
%		E-mail: \href{mailto: barmish@bu.edu}{barmish@bu.edu} 	}
%	\thanks{\hskip -10pt ${}^{***}$John A. Gubner is a Professor in  the Department of Electrical and Computer Engineering, University of Wisconsin, Madison, WI 53706. \mbox{E-mail}: \href{mailto: john.gubner@wisc.edu.}{john.gubner@wisc.edu} 	  \vspace{3mm} }
}

\begin{document}

	\maketitle
	\thispagestyle{empty}
	\pagestyle{empty}
	
%	\parindent = 0pt
	%%%%%%%%%%%%%%%%%%%%%%%%%%%%%%%%%%%%%%%%%%%%%%%%%%%%%%%%%%%%%%%%%%%%%%%%%%%%%%%%
	
	\begin{abstract} 
		In this paper, we consider a discrete-time portfolio with $m \geq 2$ assets optimization problem  which includes the \textit{rebalancing~frequency} as an additional parameter in the maximization. 
		The so-called \textit{Kelly Criterion} is used as the performance metric; i.e., maximizing the expected logarithmic growth of a trader's account, and the portfolio obtained is called the frequency-based Kelly optimal portfolio. 
		The focal point of this paper is to extend upon the results of our previous work to obtain various optimality characterizations on the portfolio. 
		To be more specific,
		using Kelly's criterion in our frequency-based formulation, we first prove  necessary and sufficient conditions for the frequency-based Kelly optimal  portfolio.
		With the aid of these conditions, we then show several new optimality characterizations such as \textit{expected ratio optimality} and \textit{asymptotic relative optimality}, and a result which we call the Extended Dominant Asset Theorem. 
		That is, we prove that the $i$th asset is \textit{dominant} in the portfolio if and only if the Kelly optimal portfolio consists of that asset only. 
		The word ``extended" on the theorem comes from the fact that it was only a sufficiency result that was proved in our previous work. Hence,  in this paper, we improve it to involve a proof of the necessity part.  
		In addition, the trader's survivability issue (no bankruptcy consideration) is also studied in detail in our frequency-based trading framework. 
		 Finally, to bridge the theory and practice,  we propose a simple trading algorithm  using the notion called \textit{dominant asset condition} to decide when should one triggers a trade. The corresponding trading performance using historical price data is reported as supporting evidence. 
	\end{abstract}
	
	\vspace{3mm}
	\begin{IEEEkeywords}
		Financial Engineering, Stochastic Systems, Portfolio Optimization, Frequency-Based Stock Trading, Uncertain Systems. 
	\end{IEEEkeywords}
	
	%%%%%%%%%%%%%%%%%%%%%%%%%%%%%%%%%%%%%%%%%%%%%%%%%%%%%%%%%%%%%%%%%%%%%%%%%%%%%%%%
	
	%
%	\vspace{8mm}
	\section{Introduction}
	\label{SECTION: INTRODUCTION}
%	\vspace{0mm}
		The takeoff point for this paper is the classical Kelly trading problem~\cite{Kelly_1956, Cover_Thomas_2012, Thorp_2006, Rotando_Thorp_1992,Algoet_Cover_1988}, which calls for maximizing the Expected Logarithmic Growth~(ELG) of a trader's account. 
		To be more specific, the problem is often formulated by a sequence of trades with independent and identically distributed (i.i.d.) returns with known probability distribution. 
		The trader's objective is  to specify a fraction~$K$ of its account value at each stage seeking to maximize the ELG at the terminal stage.
		  While many of the existing papers contributed on the Kelly's problem and its application to stock trading; e.g., see~\mbox{\cite{Cover_Thomas_2012,Lo_Orr_Zhang,Luenberger_2011,Algoet_Cover_1988,Maclean_Thorp_Ziemba_2010,Thorp_2006,Rotando_Thorp_1992}}, the effects of \textit{rebalancing frequency} is still \textit{not} heavily considered into the existing~literature. 
%		To this end, in this paper, we focus on the {\itshape rebalancing frequency\/}  which we include as an additional parameter in the maximization.
	
%	\vspace{3mm}
%	
%Along the line of our previous work in \cite{Hsieh_Barmish_Gubner_2018_ACC} and \cite{Hsieh_Gubner_Barmish_2018_CDC}, we work with an extended framework of Kelly Betting which includes consideration of the {\it  frequency} with which trades are being made. 
Some initial results along these lines regarding rebalancing frequency effects can be found in~\cite{Kuhn_Luenberger_2010, Das_Kaznachey_Goyal_2014, Das_Kaznachey_Goyal_2015} and our most recent work in \cite{Hsieh_Barmish_Gubner_2018_ACC,Hsieh_Gubner_Barmish_2018_CDC, Hsieh_Dissertation}. 
Indeed, in \cite{Kuhn_Luenberger_2010}, a portfolio optimization with returns following a continuous geometric Brownian motion was considered. However, only two extreme cases: High-frequency trading and buy and hold were emphasized in their results.
On the other hand, in~\cite{Das_Kaznachey_Goyal_2014} and \cite{Das_Kaznachey_Goyal_2015},  a portfolio optimization was considered with the constant gain~$K$ selected without regard for the frequency with which the portfolio rebalancing is done. 
Subsequently, when this same gain~$K$ is used to find an optimal rebalancing period, the resulting levels of ELG are arguably suboptimal. 
%Said another way, in the standard Kelly framework, the  frequency at which one updates trades should determine the constant feedback gain~$K$ and not vice~versa. 

%\vspace{3mm}
%\vspace{5mm}
%\subsection{Idea of Frequency-Based Stock Trading}
In contrast to~\cite{Kuhn_Luenberger_2010} and \cite{Das_Kaznachey_Goyal_2014},  our formulation to follow, achieved by adopting our previous work published in~\cite{Hsieh_Barmish_Gubner_2018_ACC} and \cite{Hsieh_Gubner_Barmish_2018_CDC}, considers full range of rebalancing frequencies and both the probability distribution of the returns and the time interval between rebalances are arbitrary.  
That is, we deal with what we view to be a more appropriate  \textit{frequency-based Kelly trading formulation} and seek an optimal portfolio which depends on the rebalancing frequency.
%In our {\it frequency-based trading} context, we study how optimal trading performance varies with respect to rebalancing  frequency. 

%\vspace{5mm}
\subsection{Idea of Frequency-Based Formulation}
Specifically, within this frequency-based trading context, we let~$\Delta t$ be the time between trade updates and~\mbox{$n \geq 1$} be the number of steps between rebalancings. 
Then the frequency is~\mbox{$f:=1/ (n \Delta t)$}.
In the sequel, we may call the quantity $n$ to be the \textit{rebalancing period}.
Now,
letting~$V(k)$ denote the trader's account value at stage $k$, the trader invests~$KV(0)$ with~\mbox{$K \geq 0$} at stage~$k = 0$ and waits~$n \geq 1$ steps before updating the trade~size. 
After each trade, the broker takes its share and the balance of the money is left to ``ride'' with resulting profits or losses  viewed as ``unrealized'' until stage $n$ is  reached.
When~$n$ is small, this is viewed as the high-frequency case, and when~$n$ is large, one use the term ``buy~and~hold". 
%In this regard, the initial question we address is as follows: Does the high-frequency strategy always lead to the best performance?   

%\vspace{5mm}
\subsection{Plan for the Remainder of the Paper}
In Section~\ref{SECTION: Problem Formulation}, we first recall our frequency-based formulation considered in \cite{Hsieh_Barmish_Gubner_2018_ACC} and \cite{Hsieh_Gubner_Barmish_2018_CDC}. 
Then, in Section~\ref{SECTION: Main Results}, based on the formulation, we offer our main result which gives necessary and sufficient conditions for the frequency-based optimal Kelly portfolio.
 In addition, several technical results regarding the various optimality conditions are also provided; e.g., extended dominant asset theorem, the expected ratio optimality, and asymptotic relative optimality are proved. 
In Section~\ref{SECTION: Dominant Ratio Trading Algorithm}, we propose a simple trading algorithm which uses the idea of extended dominant asset theorem to determine when should one trigger a trade on an underlying asset or not. 
Several back-testing simulations using historical prices are provided to support the trading performance of the algorithm. 
In Section~\ref{SECTION: Conclusion}, a concluding remark is provided. Finally, in Appendix, we also address an important issue regarding survivability~\mbox{(no-bankruptcy)}.

%\vspace{8mm}
\section{Problem Formulation}
\label{SECTION: Problem Formulation}
%\vspace{0mm}
To study the effect of rebalancing frequency in portfolio optimization problems, as seen in Section~\ref{SECTION: INTRODUCTION}, let $n \geq 1$ being the number of steps between rebalancings. 
 For $k=0,1,\dots,$ we consider a trader who is forming a portfolio consisting of~$m \geq 2$ assets and assume that at least one of them is riskless with nonnegative rate of return $r \geq 0.$ That is, if an asset is riskless, its return is deterministic and is treated as a degenerate random variable with value $r$ for all $k$ with probability one. 
Alternatively, if Asset~$i$ is a stock whose price at time~$k$ is~$S_i(k)>0$, then its return is
\[
X_i(k) = \frac{S_i(k+1) - S_i(k)}{S_i(k)}.
\]
In the sequel, for stocks, we assume that the return vectors~$
X(k):=\left[X_1(k) \, X_2(k)\,  \cdots \,X_m(k)\right]^T
$
have a known distribution and have components $X_i(\cdot)$ which can be arbitrarily correlated.\footnote{Again, if the $i$th asset is riskless, then we put $X_i(k) = r \geq 0$ with probability one. If a trader maintains \textit{cash} in its portfolio, then this corresponds to the case~\mbox{$r=0.$}} 
We also assume that these vectors are i.i.d. with components satisfying
$
X_{\min,i}  \leq X_i(k) \leq X_{\max,i}
$
with known bounds above and with $X_{\max,i}$ being finite and~\mbox{$X_{\min,i} > -1$}.
%\footnote{While the required inequality on $X_{\min,i}$ is strict, in a few certain simple cases, it is possible to relax the constraint to allow $X_{\min} = -1$ by restricting the admissible $K$; e.g., see the toy example discussed in Section~\ref{SECTION: Main Results} where $X(k) \in \{-1,1\}$ is considered.}
The latter constraint on $X_{\min,i}$ means that the loss per time step is limited to less than~$100\%$ and the price of a stock cannot drop to zero.

%\vspace{4mm}
\subsection{Feedback Control Perspectives}
%Over the last several years, a new line of research aimed at using linear feedback control concepts in a stock trading context has emerged. For example, see \cite{Barmish_Primbs_TAC_2016,Hsieh_Barmish_Gubner_2019_TAC,Malekpour_Primbs_Barmish_PI_TAC_2018,Hsieh_Barmish_Gubner_2018_ACC,Hsieh_Gubner_Barmish_2018_CDC, Zhang_2001, Primbs_ACC_2007, Hsieh_Dissertation}.
Consistent with the literature~\cite{Barmish_Primbs_TAC_2016,Hsieh_Barmish_Gubner_2019_TAC,Hsieh_Barmish_Gubner_2018_ACC,Hsieh_Gubner_Barmish_2018_CDC, Zhang_2001, Primbs_ACC_2007, Hsieh_Dissertation}, we bring the control-theoretic point of view into our problem formulation.
 That is, the system output at stage~$k$ is taken to be the trader's account value~$V(k)$ and the $i$th feedback gain 
$
0 \leq K_i \leq 1
$
represents the fraction of the account allocated to the~\mbox{$i$th} asset for~$i=1,\dots,m$. 
Said another way,  the~$i$th controller is  a linear feedback of the form
$
I_i(k) = K_iV(k).
$
%which is the $i$-th {\it investment function.}
Since~$K_i \geq 0$, the trader is going {\it long}.\footnote{In finance, a \textit{long} trade means that the trader purchases shares from the broker in the hope of making a profit from a subsequent rise in the price of the underlying stock.} 
In view of the above and recalling that there is at least one riskless asset available, without loss of generality,
%In addition,~\mbox{$K_i \leq 1$} forces the amount invested  to be no more than the account value~$V(k)$. Hence, 
we consider the unit simplex constraint
$$
K \in {\mathcal K} := \left\{K \in \mathbb{R}^{m}: K_i \geq 0 \text{ for all $i$}, \; \sum_{i=1}^m K_i = 1 \right\}
$$ 
 which is classical in finance; e.g., see \cite{Luenberger_2011, Cover_Thomas_2012, Hsieh_Gubner_Barmish_2018_CDC}.
That is, with~\mbox{$K \in \mathcal K$}, we have a guarantee that~100\% of the account is invested. Moreover, we claim that the constraint set $\mathcal{K}$ assures trader's survivability; i.e., no bankruptcy is assured; see Appendix for a proof of this important property. 
%In the sequel, we may use ${\rm int}(\mathcal{ K})$ to denote the interior of $\mathcal{ K}$.

%\vspace{3mm}
\subsection[Frequency-Dependent Dynamics and Feedback Configuration]{Frequency-Dependent Dynamics and Feedback Setting} 
%With the setup above, the dynamics of account value for the resulting closed-loop system is described by the stochastic recursion
%$
%V(k+1)= (1+K^TX(k))V(k).
%$
%\begin{align*}
%V(k+1)&= (1+K^TX(k))V(k).   %&= V(k) + I^T(k)X(k) \\
%\end{align*}
%
%
%\begin{figure}[h!]
%	\centering
%	\graphicspath{{figs/}}
%	\includegraphics[scale=0.32]{Block_Diagram_ver03_full.eps}
%	\caption{Feedback Configuration for Trading}
%	\label{fig:Block_Diagram_KV}
%\end{figure}
%
Letting $n \geq 1$ be the number of steps between rebalancings,  at time~\mbox{$k=0$}, the trader begins with initial investment control 
$$
u(0) = \sum_{i=1}^m K_i V(0)
$$
and waits~$n$ steps in the spirit of buy and hold. Then, when~\mbox{$k=n$}, the investment control is updated to be~\mbox{$
u(n) = \sum_{i=1}^m K_i V(n).
$}
%Continuing in this manner, a waiting period of $n$ stages is enforced between each rebalance. 
Now, to study the performance which is dependent on rebalancing frequency, for \mbox{$i=1,2,\ldots,m$}, we use the  \textit{compound~returns}
\[
\mathcal{X}_{n,i} := \prod_{k=0}^{n-1} (1+X_i(k)) -1
\]
which are readily seen to satisfy~\mbox{$\mathcal{X}_{n,i} > -1$} for all $n \geq 1$ and we work with the random vector~$\mathcal{X}_n$ having $i$th
component~$\mathcal{X}_{n,i}$.
%\footnote{Even if the distribution of $X(k)$ is simple, the distribution of compound returns is often nontrival complicate. In Appendix II, we provide one interesting case when the $X(k)$  is uniformly distributed.}
Then, for an initial account value~$V(0)>0$ and rebalancing period $n \geq 1$, the corresponding account value at stage $n$ is described by the stochastic recursion
$$
V(n) = (1 + K^T \mathcal{X}_n )V(0). 
$$
%\begin{align*}
%V(n) &\doteq (1 + K^T \mathcal{X}_n )V(0).  %V(0) + K^T \mathcal{X}_n V(0) \\
%%	&
%\end{align*}
%where the subscript $n$ in $V_n(n)$ is used to emphasize the dependence on $n$. 
In the sequel, we may sometimes write $V(n,K)$ to emphasize the dependence on the feedback gain $K$.

%\vspace{4mm}
\subsection[Frequency-Dependent Optimization Problem]{Frequency-Dependent Optimization Problem} 
Consistent with our prior work in \cite{Hsieh_Barmish_Gubner_2018_ACC} and \cite{Hsieh_Gubner_Barmish_2018_CDC}, for any rebalancing period~$n \geq 1$, we 
study the problem of maximizing   the expected logarithmic~growth 
%$$g_n(K) = \mathbb{E}\left[ {\log (1 + {K^T}{\mathcal{X}_n})} \right]/n$
\begin{align*}
{g_n}(K) 
&:=  \frac{1}{n}\mathbb{E}\left[ \log \frac{V(n,K)}{V(0)} \right] \\[1ex]
&= \frac{1}{n}\mathbb{E}\left[ {\log (1 + {K^T}{\mathcal{X}_n})} \right]
\end{align*}
 and we use $g_n^*$ to denote the associated optimal expected logarithmic~growth.
%\[
%g_n^* \doteq \max_{K \in \mathcal{K}}g_n(K)
%\]
It is readily verified that $g_n(K)$ is concave in~$K$.
Furthermore, any vector~\mbox{$K^* \in \mathcal{K} \subset \mathbb{R}^m$} satisfying~$g_n(K^*) = g_n^*$ is called a \textit{Kelly optimal feedback gain}. 
%In the sequel, instead of using $K^*$, we sometime denote $K_n^*$ to emphasize the optimum is dependent on the rebalancing period of length~$n$. 
The portfolio which uses the Kelly optimal feedback gain is called \textit{frequency-based Kelly optimal portfolio}.

%\vspace{8mm}
%\section{Preliminaries}
%\label{SECTION: Preliminaries}
%\vspace{0mm}
%	\vspace{3mm}
%\begin{lemma}[Preliminaries]\label{lemma: Preliminaries}
%	If $\mathbb{E}[X(0)]=0$, then $K^*=0$
%	\end{align*}
%	
%\end{lemma}

%	\vspace{5mm}
	\section{Results On Optimality}
	\label{SECTION: Main Results}
	\vspace{0mm}
In this section, we provide necessary and sufficient conditions which characterize the frequency-based Kelly optimal~portfolio.

%	\vspace{3mm}
	\begin{theorem}[Necessity and Sufficiency]\label{thm: KKT optimality}
	The feedback gain $K^*$ is optimal to the frequency-dependent optimization problem described in Section~\ref{SECTION: Problem Formulation} if and only if for $i=1,2,\dots,m$,
\begin{align*}
\mathbb{E} \left[\frac{1 +\mathcal{X}_{n,i}}{ 1 + {K^*}^T \mathcal{X}_{n}}\right] = 1, \;\;\; \text{ if } K_i^* >0
\end{align*}
\begin{align*}
\mathbb{E} \left[\frac{1+\mathcal{X}_{n,i}}{1+ {K^*}^T \mathcal{X}_{n}}\right] \leq 1,\;\;\;  \text{ if } K_i^*=0 .
\end{align*}

	\end{theorem}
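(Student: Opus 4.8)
The plan is to treat this as a convex program and invoke the Karush--Kuhn--Tucker (KKT) conditions, which are both necessary and sufficient in this setting because $g_n$ is concave in $K$ (as already noted) and the feasible set $\mathcal{K}$ is cut out by the single linear equality $\sum_{i=1}^m K_i = 1$ together with the linear inequalities $K_i \geq 0$. Since every constraint is affine, a constraint qualification holds automatically, so stationarity of the Lagrangian characterizes global optimality in both directions, and I never need a Slater-type argument.

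First I would establish that $g_n$ is continuously differentiable on a neighborhood of $\mathcal{K}$ and compute $$\frac{\partial g_n}{\partial K_i}(K) = \frac{1}{n}\,\mathbb{E}\!\left[\frac{\mathcal{X}_{n,i}}{1 + K^T\mathcal{X}_n}\right].$$ To justify differentiating under the expectation, I would lean on the standing bounds: since $X_{\min,i} > -1$, each compound return obeys $1 + \mathcal{X}_{n,i} \geq (1 + X_{\min,i})^n > 0$, so on $\mathcal{K}$ one has $1 + K^T\mathcal{X}_n = \sum_i K_i(1+\mathcal{X}_{n,i}) \geq \min_i (1+X_{\min,i})^n > 0$ uniformly, while $\mathcal{X}_{n,i}$ is bounded above by the finiteness of $X_{\max,i}$. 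Hence the integrand and its partial derivatives are bounded, and dominated convergence legitimizes the interchange.

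Next I would form the Lagrangian with a multiplier $\lambda$ for the equality constraint and multipliers $\mu_i \geq 0$ for the constraints $K_i \geq 0$. Stationarity together with complementary slackness then yields, for each $i$, that $\partial g_n/\partial K_i = \lambda$ whenever $K_i^* > 0$ and $\partial g_n/\partial K_i \leq \lambda$ whenever $K_i^* = 0$. Writing $\partial_i := \mathbb{E}\!\left[\mathcal{X}_{n,i}/(1+{K^*}^T\mathcal{X}_n)\right] = n\,\partial g_n/\partial K_i$, these read $\partial_i = n\lambda$ on the support of $K^*$ and $\partial_i \leq n\lambda$ off it.

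The crux --- and the step I expect to be the main obstacle --- is pinning down the multiplier $n\lambda$ and recasting these derivative conditions into the stated ratio form. The key identity is that, because $\sum_j K_j^* = 1$, $$\sum_{j=1}^m K_j^*\,\frac{\mathcal{X}_{n,j}}{1+{K^*}^T\mathcal{X}_n} = \frac{{K^*}^T\mathcal{X}_n}{1+{K^*}^T\mathcal{X}_n} = 1 - \frac{1}{1+{K^*}^T\mathcal{X}_n}.$$ Taking expectations and setting $c := \mathbb{E}\!\left[1/(1+{K^*}^T\mathcal{X}_n)\right]$ gives $\sum_j K_j^* \partial_j = 1 - c$; but only the supported indices contribute, and there $\partial_j = n\lambda$, so $n\lambda\,\sum_j K_j^* = n\lambda = 1 - c$. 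Substituting $n\lambda = 1-c$ and adding the constant $c$ to each derivative condition converts $\partial_i$ into $\mathbb{E}\!\left[(1+\mathcal{X}_{n,i})/(1+{K^*}^T\mathcal{X}_n)\right] = c + \partial_i$, so the support condition $\partial_i = 1-c$ becomes equality with $1$ and the off-support condition $\partial_i \leq 1-c$ becomes $\leq 1$. This reproduces exactly the two stated conditions, and since every implication used is reversible, it establishes necessity and sufficiency in a single stroke.
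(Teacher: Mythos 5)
Your proposal is correct and follows essentially the same route as the paper: both treat the problem as a concave program over the simplex with affine constraints, invoke the KKT conditions (necessary since the constraints are affine, sufficient by concavity), use complementary slackness plus the weighted-sum identity to pin down the multiplier, and justify the gradient computation via boundedness of the returns. The only cosmetic difference is that the paper first changes variables to total returns $\mathcal{R}_n = \mathcal{X}_n + \mathbf{1}$, so that the pointwise identity $\sum_j K_j^*\mathcal{R}_{n,j}/{K^*}^T\mathcal{R}_n = 1$ forces $\lambda = 1$ directly, whereas you keep excess returns and obtain $n\lambda = 1-c$ with $c = \mathbb{E}\left[1/(1+{K^*}^T\mathcal{X}_n)\right]$, then add $c$ back to reach the ratio form --- the same algebra performed in a different order.
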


\vspace{0mm}	
\begin{proof} To prove necessity,
 define  $\mathcal{R}_n := \mathcal{X}_n + \bf{1}$ representing the total return with~$i$th component \mbox{$\mathcal{R}_{n,i} = \mathcal{X}_{n,i}+1 $} and
	\mbox{${\bf 1} := [1 \; 1\; \cdots \; 1]^T \in \mathbb{R}^m$}. 
%	Then $$g_n(K)=\frac{1}{n}\mathbb{E}[\log K^T\mathcal{R}_n].$$
	We now consider the frequency-dependent optimization problem as an equivalent constrained convex minimization problem as follows:
	\begin{align*}
	&\max_K -\mathbb{E}[\log K^T \mathcal{R}_n]\\
	&\text{subject to}\\ 	
	&\hspace{5mm}   K^T {\bf 1}-1 = 0 ;
	\\
	& \hspace{5mm} -K^T e_i \leq 0, \;\;\;  i=1,2,...,m
	\end{align*}
	where  $e_i $ is unit vector having $1$ at $i$th component. 
	Then the Karush-Kuhn-Tucker Conditions, see e.g.,~\cite{Boyd_Vandenberghe_2004},  tell us that if~$K^*$ is a local maximum then there is a scalar $\lambda \in \mathbb{R}^1$ and a vector~\mbox{$\mu \in \mathbb{R}^m$} with component~$\mu_j \geq 0$ such that 
	\begin{align*}
	\nabla (-\mathbb{E}[\log {K^*}^T \mathcal{R}_n]) + \lambda  {\bf 1} - \sum_{i=1}^m \mu_i e_i = \textbf{0} 
	\end{align*}
	with $\textbf{0} \in \mathbb{R}^m$ being zero vector and 
$
	\mu_j {K^*}^T e_j = 0
$
	for \mbox{$ j = 1,2,\dots,m$}.
	This implies that for $j = 1,\dots,m$, we have~$\mu_j{K_j^*} = 0$ and
	\begin{align} \label{eq:01}
	-\mathbb{E} \left[\frac{\mathcal{R}_{n,j}}{ {K^*}^T \mathcal{R}_{n}}\right] + \lambda- \mu_j = 0. 
	\end{align}
	We note here that the interchanging of differentiation and expectation is justifiable since $\mathcal{X}_{n,i}$ is bounded.
	Now we take a weighted sum of  equation~(\ref{eq:01}); i.e.,
	\begin{align*}
	\sum_{j=1}^m K_j^* \left( -\mathbb{E} \left[\frac{\mathcal{R}_{n,j}}{ {K^*}^T \mathcal{R}_{n}}\right] + \lambda - \mu_j \right) = 0
	\end{align*}
	which leads to
	\begin{align*}
	-\sum_{j=1}^m K_j^*  \mathbb{E} \left[\frac{\mathcal{R}_{n,j}}{ {K^*}^T \mathcal{R}_{n}}\right] + \sum_{j=1}^m K_j^* \lambda   - \sum_{j=1}^m K_j^*\mu_j = 0.
	\end{align*}
Using the facts that $\mu_j K_j^* = 0$ for all $j$ and $\sum_{j=1}^m K_j^* = 1$, we have
	\begin{align}\label{eq:02}
	-\sum_{j=1}^m K_j^*  \mathbb{E} \left[\frac{\mathcal{R}_{n,j}}{ {K^*}^T \mathcal{R}_{n}}\right] +  1 \cdot \lambda   + 0 = 0.
	\end{align}
Note that
	\begin{align*}
	\sum_{j=1}^m K_j^*  \mathbb{E} \left[\frac{\mathcal{R}_{n,j}}{ {K^*}^T \mathcal{R}_{n}}\right] &=   \mathbb{E} \left[\frac{{K^*}^T\mathcal{R}_{n}}{ {K^*}^T \mathcal{R}_{n}}\right] = 1.
	\end{align*}
	Thus, substituting the result above back into equation~(\ref{eq:02}), we obtain $\lambda = 1$. This tells us that for
	$j = 1,\dots,m$,
	\begin{align*}
	-\mathbb{E} \left[\frac{\mathcal{R}_{n,j}}{{K^*}^T \mathcal{R}_{n}}\right] + 1- \mu_j = 0 
	\end{align*}
	and
$
	\mu_j{K_j^*} = 0.
$
	Thus, to sum up, if $K_j^* >0$, implies that $\mu_j =0$ and 
	\begin{align*}
	\mathbb{E} \left[\frac{\mathcal{R}_{n,j}}{{K^*}^T \mathcal{R}_{n}}\right] = 1.
	\end{align*}
	If $K_j^*=0,$ implies that $\mu_j \geq 0$ implies that
	\begin{align*}
	\mathbb{E} \left[\frac{\mathcal{R}_{n,j}}{{K^*}^T \mathcal{R}_{n}}\right] \leq 1.
	\end{align*}
	Now, transforming the $\mathcal{R}_n$ back to $\mathcal{X}_n$ by $\mathcal{R}_n = \mathcal{X}_n + 1$ and using the fact that $\sum_{i=1}^m K_i^* = 1$ again, we obtain the desired conditions.
	Finally, by concavity of $\mathbb{E}[\log K^T\mathcal{R}_n]$, the conditions above are also~sufficient. \qedhere
\end{proof}

%\vspace{3mm}\noindent
\textbf{Remarks:}  It is interesting to note that if $n=1$, then Theorem~\ref{thm: KKT optimality} reduces to the classical result in classical Kelly theory; see \cite[Theorem 16.2.1]{Cover_Thomas_2012}.   
Additionally, Theorem~\ref{thm: KKT optimality} is also closely related to the Dominant Asset Theorem given in our prior work \cite{Hsieh_Gubner_Barmish_2018_CDC}.
For the sake of completeness, we recall the statement of the theorem as follows: Given a collection of $m \geq 2$ assets, if Asset~$j$ is \textit{dominant}; i.e., Asset~$j$ satisfies
\[
\mathbb{E}\left[ \frac{1+X_i(0)}{1+X_j(0)}\right]\leq 1
\] for every other asset $ i \neq j$, then $K^* = e_j.$ Thus, $K_i^* = 0$ for $i \neq j.$\footnote{Intuitively speaking, the Dominant Asset Theorem tells us that when condition is right, one should ``bet the farm."} In fact, this result can be viewed as a special case of Theorem~\ref{thm: KKT optimality}. 
%In addition, we note here that the Khun-Tucker characterization has several interesting consequence regarding optimality, which will be seen later in this section. Before we proceed further, we consider a simple toy example to illustrate the usefulness of the theorem.
%To close this remark, we indicate that in \cite{Hsieh_Gubner_Barmish_2018_CDC}, we proved a sufficiency result of the optimal feedback gain when an asset in the portfolio is \textit{dominant}. This result is called Dominant Asset Theorem. 
It should be also noted that the Dominant Asset Theorem is about sufficiency on optimal $K^*$--- not necessity. 
Fortunately, with the aids of Theorem~\ref{thm: KKT optimality}, we are now able to prove the missing  part on necessity of Dominant Asset Theorem. This is summarized in the next theorem to follow.

\begin{theorem}[Extended Dominant Asset Theorem]\label{thm: Extended Dominant Asset Theorem}
	The optimal Kelly feedback gain $K^*=e_j$ if and only if $$\mathbb{E}\bigg[\frac{1+X_i(0)}{1+X_j(0)}\bigg]\leq 1.$$
\end{theorem}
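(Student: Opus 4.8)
The plan is to derive this equivalence directly from the necessary-and-sufficient KKT conditions of Theorem~\ref{thm: KKT optimality}, by specializing those conditions to the candidate gain $K^* = e_j$ and then collapsing the multi-step compound-return conditions into single-step ones through the i.i.d.\ assumption on the returns. Since Theorem~\ref{thm: KKT optimality} is already an ``if and only if,'' both directions of the present statement will come out of a single computation once I rewrite its conditions in terms of the one-period returns $X_i(0)$.

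First I would substitute $K^* = e_j$ into the conditions of Theorem~\ref{thm: KKT optimality}. Because $e_j^T \mathcal{X}_n = \mathcal{X}_{n,j}$, the common denominator becomes $1 + \mathcal{X}_{n,j}$. The condition attached to the $j$th coordinate (where $K_j^* = 1 > 0$) reads $\mathbb{E}[(1+\mathcal{X}_{n,j})/(1+\mathcal{X}_{n,j})] = 1$ and is satisfied trivially, so all the content sits in the $m-1$ inequalities for $i \neq j$, namely $\mathbb{E}[(1+\mathcal{X}_{n,i})/(1+\mathcal{X}_{n,j})] \leq 1$.

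The key step is to rewrite each such ratio of compound returns as a product over the rebalancing window. Using $1 + \mathcal{X}_{n,i} = \prod_{k=0}^{n-1}(1+X_i(k))$, I get $(1+\mathcal{X}_{n,i})/(1+\mathcal{X}_{n,j}) = \prod_{k=0}^{n-1}(1+X_i(k))/(1+X_j(k))$. Since the return vectors $X(0),\dots,X(n-1)$ are i.i.d., the single-period ratios $(1+X_i(k))/(1+X_j(k))$ are i.i.d.\ in $k$ — here the possible correlation between $X_i$ and $X_j$ \emph{within} a stage is irrelevant, because each factor depends only on the vector at one stage. Hence the expectation factorizes into $\mathbb{E}[(1+\mathcal{X}_{n,i})/(1+\mathcal{X}_{n,j})] = (\mathbb{E}[(1+X_i(0))/(1+X_j(0))])^n$.

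Writing $\rho_i := \mathbb{E}[(1+X_i(0))/(1+X_j(0))]$ and noting $\rho_i > 0$ since $X_{\min,i} > -1$ forces both numerator and denominator to be strictly positive, the inequality for coordinate $i$ becomes $\rho_i^n \leq 1$. As $\rho_i > 0$ and $n \geq 1$, this holds if and only if $\rho_i \leq 1$. Thus the KKT inequalities for $K^* = e_j$ are equivalent to $\rho_i \leq 1$ for all $i \neq j$, which is exactly the dominance condition in the statement; the sufficiency direction of Theorem~\ref{thm: KKT optimality} yields ``$\Leftarrow$'' and the necessity direction yields ``$\Rightarrow$.'' I do not expect a serious obstacle: the only point needing care is the factorization of the expectation, which rests squarely on independence \emph{across time} and not on any independence among assets within a stage.
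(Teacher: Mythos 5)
Your proposal is correct. Its core computation --- substituting $K^* = e_j$ into the conditions of Theorem~\ref{thm: KKT optimality}, factorizing $\mathbb{E}\left[\frac{1+\mathcal{X}_{n,i}}{1+\mathcal{X}_{n,j}}\right] = \rho_i^n$ with $\rho_i := \mathbb{E}\left[\frac{1+X_i(0)}{1+X_j(0)}\right]$ using independence across time, and then stripping the $n$th power using $\rho_i > 0$ --- is exactly the paper's proof of the necessity direction (that $K^* = e_j$ implies the dominance inequalities). Where you genuinely differ is on sufficiency: the paper does not reprove that direction at all, instead citing the Dominant Asset Theorem from the authors' earlier conference work, whereas you observe that your chain of equivalences is reversible, so the ``if'' half of Theorem~\ref{thm: KKT optimality} delivers the converse from the very same computation: $\rho_i \leq 1$ for all $i \neq j$ gives $\rho_i^n \leq 1$, i.e., the KKT conditions hold at $e_j$ (the $j$th condition being trivially an equality), hence $e_j$ is optimal. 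This makes your argument self-contained within the present paper, which is arguably an improvement, and your explicit remark that only temporal independence matters for the factorization --- not any independence among assets within a stage --- is a point the paper leaves implicit. One small caveat, inherited from the paper's own statement rather than introduced by you: in the sufficiency direction, ``$K^* = e_j$'' should be read as ``$e_j$ is a Kelly optimal gain,'' since Theorem~\ref{thm: KKT optimality} does not by itself assert uniqueness of the maximizer.
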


\begin{proof}
	The sufficiency is proved in our prior work in~\mbox{\cite[Dominant Asset Theorem]{Hsieh_Gubner_Barmish_2018_CDC}}. Hence, for the sake of brevity, we only provide a proof of necessity here.  Assuming that $K^*=e_j$, we must show the desired inequality holds. Applying Theorem~\ref{thm: KKT optimality}, it follows that 
	for $i \neq j,$ $K_i^*=0$ and
	\[
	\mathbb{E} \left[\frac{1+\mathcal{X}_{n,i}}{1+ {K^*}^T\mathcal{X}_{n,j}}\right] = \mathbb{E} \left[\frac{1+\mathcal{X}_{n,i}}{1+ \mathcal{X}_{n,j}}\right] \leq 1.
	\]
	Using the definition of $\mathcal{X}_{n,i} = \prod_{k=0}^{n-1}(1+X_i(k)))-1$, the equality above indeed implies that
	\begin{align*}
	\mathbb{E} \left[ \prod_{k=0}^{n-1}\frac{1+X_i(k)}{1+ X_j(k)}\right] \leq 1.
	\end{align*}
	Since $X_i(k)$ are i.i.d., in $k$, we have
	\begin{align}\label{eq:03}
	\left( \mathbb{E} \left[\frac{1+X_i(0)}{1+ X_j(0)}\right] \right)^n \leq 1.
	\end{align}
	Note that $X_i(0)>-1$ for all $i=1,2,\dots,m$, it follows that the ratio 
	$$\frac{1+X_i(0)}{1+ X_j(0)} > 0$$ with probability one; hence its expected value is also strictly positive. Thus, in combination with inequality~(\ref{eq:03}), we conclude
	\[
	\mathbb{E} \left[\frac{1+X_i(0)}{1+ X_j(0)}\right] \leq 1.  \qedhere
	\]
\end{proof}

%\vspace{3mm}\noindent
\textbf{Remark:} When the condition
	\begin{align}\label{eq: dominant ratio}
		\mathbb{E}\bigg[\frac{1+X_i(0)}{1+X_j(0)}\bigg]\leq 1
	\end{align}
	the Extended Dominant Asset Theorem~\ref{thm: Extended Dominant Asset Theorem} tells us to invest all available funds on the $j$th asset. In the sequel, the inequality~(\ref{eq: dominant ratio}) is called the \textit{dominant asset condition}.
	 As seen later in Section~\ref{SECTION: Dominant Ratio Trading Algorithm}, this condition allows us to construct a simple algorithm which may be useful for practical stock trading. 
	 
	 In the rest of this section, some other new optimality results are provided.
	
%	\vspace{5mm}	
	\begin{lemma}[Expected Ratio Optimality]\label{lemma: Expected Ratio Optimality}
Let $K^*$ be the frequency-based optimal Kelly feedback gain. Then 
\[
\mathbb{E}\bigg[\frac{ 1+{K}^T\mathcal{X}_n}{1+{K^*}^T\mathcal{X}_n} \bigg] \leq  1
\]
for any $K$. 
In addition, we have
\[
\mathbb{E}\bigg[\log \frac{1+K^T\mathcal{X}_n}{1+{K^*}^T\mathcal{X}_n}\bigg] \leq 0
\]for any $K$.
	\end{lemma}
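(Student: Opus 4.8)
The plan is to reduce both inequalities to the necessary-and-sufficient conditions already established in Theorem~\ref{thm: KKT optimality}. The key observation I would exploit at the outset is the algebraic identity, valid because $\sum_{i=1}^m K_i = 1$ for every $K \in \mathcal{K}$,
\[
1 + K^T \mathcal{X}_n = K^T {\bf 1} + K^T \mathcal{X}_n = K^T(\mathcal{X}_n + {\bf 1}) = K^T \mathcal{R}_n,
\]
where I recall the total-return vector $\mathcal{R}_n := \mathcal{X}_n + {\bf 1}$ with components $\mathcal{R}_{n,i} = \mathcal{X}_{n,i} + 1$. The same identity holds with $K^*$ in place of $K$, so the ratio inside the first expectation is exactly $K^T\mathcal{R}_n / {K^*}^T\mathcal{R}_n$.

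For the first inequality, I would expand the numerator by linearity of expectation, pulling the deterministic weights $K_i$ outside:
\[
\mathbb{E}\left[\frac{K^T\mathcal{R}_n}{{K^*}^T\mathcal{R}_n}\right] = \sum_{i=1}^m K_i \, \mathbb{E}\left[\frac{\mathcal{R}_{n,i}}{{K^*}^T\mathcal{R}_n}\right].
\]
Now Theorem~\ref{thm: KKT optimality}, read in terms of $\mathcal{R}_n$, says precisely that each expectation $\mathbb{E}[\mathcal{R}_{n,i}/{K^*}^T\mathcal{R}_n]$ equals $1$ when $K_i^* > 0$ and is at most $1$ when $K_i^* = 0$; in either case it is bounded above by $1$. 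Since every $K_i \geq 0$, I can bound the sum termwise and then invoke $\sum_{i=1}^m K_i = 1$ to conclude that the whole expression is at most $1$. This establishes the first claim for every $K \in \mathcal{K}$.

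For the second (logarithmic) inequality, I would appeal to Jensen's inequality. Since $\log$ is concave,
\[
\mathbb{E}\left[\log \frac{1+K^T\mathcal{X}_n}{1+{K^*}^T\mathcal{X}_n}\right] \leq \log \mathbb{E}\left[\frac{1+K^T\mathcal{X}_n}{1+{K^*}^T\mathcal{X}_n}\right],
\]
and the inner expectation is at most $1$ by the part just proved. By monotonicity of $\log$, the right-hand side is at most $\log 1 = 0$, which is the desired bound.

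There is no genuinely hard step here: once the identity $1 + K^T\mathcal{X}_n = K^T\mathcal{R}_n$ is recognized, the first inequality is an immediate corollary of the KKT characterization, and the logarithmic version follows from Jensen's inequality applied to that same bound. The only point requiring a small amount of care is confirming that the expectations are well defined and finite so that linearity and Jensen both legitimately apply; this is guaranteed by the standing boundedness assumptions $X_{\min,i} > -1$ and $X_{\max,i} < \infty$, which ensure $1 + {K^*}^T\mathcal{X}_n > 0$ almost surely and keep the integrands integrable, exactly as was used to justify interchanging differentiation and expectation in the proof of Theorem~\ref{thm: KKT optimality}.
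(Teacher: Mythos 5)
Your proposal is correct and follows essentially the same route as the paper's own proof: both multiply the KKT conditions from Theorem~\ref{thm: KKT optimality} by the nonnegative weights $K_i$, sum using $\sum_{i=1}^m K_i = 1$ (your identity $1+K^T\mathcal{X}_n = K^T\mathcal{R}_n$ is exactly the algebraic step the paper performs implicitly), and then apply Jensen's inequality for the logarithmic bound. Your added remark on integrability is a harmless refinement the paper leaves tacit.
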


\begin{proof} Let $K$ be given.
From Theorem~\ref{thm: KKT optimality}, it follows that for a Kelly optimal feedback gain $K^*$, we have
\[
\mathbb{E}\bigg[ \frac{1+X_{n,i}}{1+{K^*}^T\mathcal{X}_n} \bigg] \leq 1
\] for all $i=1,\dots, m$. 
Multiplying this inequality by $K_i$ and summing over~$i$, we obtain
\[
\sum_{i=1}^m K_i \mathbb{E}\bigg[ \frac{1+X_{n,i}}{1+{K^*}^T\mathcal{X}_n} \bigg] \leq \sum_{i=1}^m K_i = 1
\]
which is equivalent to
\[
 \mathbb{E}\bigg[\frac{ 1+{K}^T\mathcal{X}_n}{1+{K^*}^T\mathcal{X}_n} \bigg] \leq  1.
\]
To complete the proof, we invoke Jensen's inequality on the quantity $\mathbb{E}\left[\log \frac{1+K^T\mathcal{X}_n}{1+{K^*}^T\mathcal{X}_n}\right]$ and observe that
\begin{align*}
\mathbb{E}\bigg[\log \frac{1+K^T\mathcal{X}_n}{1+{K^*}^T\mathcal{X}_n}\bigg] &\leq  \log \mathbb{E}\bigg[ \frac{1+K^T\mathcal{X}_n}{1+{K^*}^T\mathcal{X}_n}\bigg] \\[.5ex]
& \leq \log 1 = 0.
\end{align*}
Hence, the proof is complete.
\end{proof}

%\vspace{3mm}\noindent
\textbf{Remark:} Lemma~\ref{lemma: Expected Ratio Optimality} above tell us that the frequency-based Kelly optimal portfolio also maximizes the expected relative wealth $\mathbb{E}[\frac{1+K^T\mathcal{X}_n}{1+{K^*}^T\mathcal{X}_n}]$. In addition, we note that the for any $K$,
\begin{align*}
1+K^T\mathcal{X}_n &= 1+\sum_{i=1}^{m}K_i\mathcal{X}_{n,i} \\
& \geq 1+\min_j\mathcal{X}_{\min,j}\sum_{i=1}^{m}K_i\\
& >0.
\end{align*}
Hence, the ratio $\frac{1+K^T\mathcal{X}_n}{1+{K^*}^T\mathcal{X}_n} > 0$. Now using the Markov inequality, the condition  \[
\mathbb{E}\bigg[\frac{ 1+K^T\mathcal{X}_n }{1+{K^*}^T\mathcal{X}_n}\bigg] \leq 1
\]for any $K$ implies that
\[
P \left(\frac{ 1+K^T\mathcal{X}_n }{1+{K^*}^T\mathcal{X}_n} > c\right) \leq \frac{1}{c}
\] for any $c >0.$ The following lemma indicates a stronger result on the asymptotic relative optimality of $K^*$. 

%\vspace{3mm}
\begin{lemma}[Asymptotic Relative Optimality]\label{lemma: Asymptotic Relative Optimality}
The optimal feedback vector $K^*$ is such that
\[
\limsup_{n \to \infty} \frac{1}{n}\log \frac{1+K^T\mathcal{X}_n}{1+{K^*}^T\mathcal{X}_n}\leq 0
\] with probability one.
\end{lemma}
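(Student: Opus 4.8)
The plan is to combine the expected-ratio bound from Lemma~\ref{lemma: Expected Ratio Optimality} with the first Borel--Cantelli lemma. To this end, fix an arbitrary $K$ and abbreviate the relative wealth by
\[
Z_n := \frac{1+K^T\mathcal{X}_n}{1+{K^*}^T\mathcal{X}_n},
\]
which is strictly positive with probability one by the remark preceding the lemma. From Lemma~\ref{lemma: Expected Ratio Optimality} we already have $\mathbb{E}[Z_n] \leq 1$ for every $n \geq 1$, and this single moment bound is the only input about $K^*$ that the argument will need.

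First I would fix $\varepsilon > 0$ and apply Markov's inequality (exactly as recorded in the preceding remark) at the threshold $c = e^{n\varepsilon}$ to obtain
\[
P\left( \frac{1}{n}\log Z_n > \varepsilon \right) = P\left( Z_n > e^{n\varepsilon} \right) \leq \frac{\mathbb{E}[Z_n]}{e^{n\varepsilon}} \leq e^{-n\varepsilon}.
\]
Since $\sum_{n=1}^{\infty} e^{-n\varepsilon} < \infty$, the first Borel--Cantelli lemma guarantees that, with probability one, the event $\{\tfrac{1}{n}\log Z_n > \varepsilon\}$ occurs for only finitely many $n$. Consequently $\limsup_{n\to\infty}\tfrac{1}{n}\log Z_n \leq \varepsilon$ almost surely.

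The only remaining care concerns letting $\varepsilon \to 0$ while keeping the almost-sure conclusion intact. I would apply the step above to the countable sequence $\varepsilon_k = 1/k$, obtaining for each $k$ a probability-one event on which $\limsup_{n\to\infty}\tfrac{1}{n}\log Z_n \leq 1/k$. Intersecting these countably many probability-one events yields a single probability-one event on which the bound $\limsup_{n\to\infty}\tfrac{1}{n}\log Z_n \leq 1/k$ holds simultaneously for all $k$, and hence $\limsup_{n\to\infty}\tfrac{1}{n}\log Z_n \leq 0$, which is the asserted claim.

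I do not expect a serious obstacle here: the argument is the standard Breiman/Algoet--Cover-style combination of a moment bound with Borel--Cantelli, and it is worth emphasizing that it does \emph{not} require independence across $n$ (the compound returns $\mathcal{X}_n$ are nested and therefore strongly dependent), since the first Borel--Cantelli lemma needs only summability of the individual tail probabilities, not any joint structure. The one place demanding attention is the countable reduction in the final step, which is what upgrades the ``for each fixed $\varepsilon$'' statement into a single almost-sure statement valid for all $\varepsilon$ at once.
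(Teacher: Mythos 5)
Your proposal is correct and follows essentially the same route as the paper: the expected-ratio bound of Lemma~\ref{lemma: Expected Ratio Optimality}, Markov's inequality, and the first Borel--Cantelli lemma. The only difference is in the choice of threshold: the paper takes $c_n = n^2$, so the resulting almost-sure bound $\frac{1}{n}\log Z_n \leq \frac{2\log n}{n}$ tends to zero and yields the limsup statement in one step, whereas your exponential threshold $e^{n\varepsilon}$ gives the bound only for each fixed $\varepsilon$ and thus requires the additional (correctly executed) countable intersection over $\varepsilon_k = 1/k$.
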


\begin{proof}
The idea of the proof is very similar to the one presented in \cite[Theorem~16.3.1]{Cover_Thomas_2012}. However, for the sake of completeness, we provide our own proof here.	Recalling Lemma~\ref{lemma: Expected Ratio Optimality}, we have
\[
\mathbb{E}\bigg[\frac{1+K^T\mathcal{X}_n}{1+{K^*}^T\mathcal{X}_n}\bigg] \leq 1
\]
and Markov inequality tell us that
\[
P \left(\frac{1+K^T\mathcal{X}_n}{1+{K^*}^T\mathcal{X}_n} > c_n\right) \leq \frac{1}{c_n}
\] for any $c_n >0.$ Hence,
\[
P\left(  \frac{1}{n} \log \frac{1+K^T\mathcal{X}_n }{1+{K^*}^T\mathcal{X}_n }>\frac{1}{n} \log c_n \right) \leq \frac{1}{c_n}.
\]
Take $c_n := n^2$ and summing all $n$, we have
\[
\sum_{n=1}^\infty P \left( \frac{1}{n} \log \frac{1+K^T\mathcal{X}_n }{1+{K^*}^T\mathcal{X}_n }>\frac{2\log n}{n}  \right) \leq \sum_{n=1}^\infty \frac{1}{n^2} < \infty.
\]
Therefore, applying the Borel-Cantelli Lemma; e.g., see \cite{Rosenthal_probability}, it leads to
\[
P \left(\frac{1}{n} \log \frac{1+K^T\mathcal{X}_n }{1+{K^*}^T\mathcal{X}_n }>\frac{2\log n}{n} \,\, \text{infinitely often} \right) = 0.
\]
Thus, there exists $N>0$ such that for all $n\geq N$, we have
\[
\frac{1}{n} \log \frac{1+K^T\mathcal{X}_n }{1+{K^*}^T\mathcal{X}_n }\leq \frac{2\log n}{n}.
\]
It follows that
\[
\limsup_{n \to \infty} \frac{1}{n}\log \frac{1+K^T\mathcal{X}_n }{1+{K^*}^T\mathcal{X}_n }\leq 0
\]
with probability one.
\end{proof}

\vspace{3mm}
\textbf{Remark:} Note that for $n\geq 1$, $V(n) = (1+K^T \mathcal{X}_n)V(0)$, thus, Lemma~\ref{lemma: Asymptotic Relative Optimality} implies that
\[
\limsup_{n \to \infty} \frac{1}{n} \log \frac{V(n)}{V^*(n)} \leq 0
\]
with probability one
where $V^*(n) = (1+{K^*}^T \mathcal{X}_n)V(0)$.

%\begin{lemma}[Relative Positive Supermartingale]
%	We have
%	\[
% \frac{1+K^T \mathcal{X}_n}{1+{K^{*}}^T\mathcal{ X}_n}
%	\] is a positive supermartingale.
%\end{lemma}
%
%\begin{proof}
%	We observe that for any $n\geq 1$,
%	\begin{align*}
%		\mathbb{E}\left[ \frac{1+K^T \mathcal{X}_{n+1}}{1+{K^{*}}^T\mathcal{ X}_{n+1}} \, \vline \, X(n)\right] 
%		&= \mathbb{E}\left[ \frac{1+ \sum_{i=1}^{m}K_i(1+X_i(n))\mathcal{ X}_{n,i}}{1+ \sum_{i=1}^{m}K_i^*(1+X_i(n))\mathcal{ X}_{n,i}} \, \vline \, X(n)\right]\\
%		&=\mathbb{E}\left[ \frac{1+ K^T\mathcal{ X}_n+\sum_{i=1}^{m}K_iX_i(n)\mathcal{ X}_{n,i})}{1+ {K^*}^T\mathcal{ X}_n+\sum_{i=1}^{m}K_i^*X_i(n)\mathcal{ X}_{n,i})} \, \vline \, X(n)\right]\\
%	\end{align*}
%\end{proof}
%

%\vspace{5mm}
\section{Dominant Ratio Trading Algorithm}\label{SECTION: Dominant Ratio Trading Algorithm}
Besides the theoretical interests, as mentioned in Section~\ref{SECTION: Main Results}, we view that Theorem~\ref{thm: KKT optimality} and Extended Dominate Asset Theorem~\ref{thm: Extended Dominant Asset Theorem} may be useful to design an algorithm for practical stock trading. 
The main idea is to take advantage of the Dominant Asset Condition stated in Theorem~\ref{thm: Extended Dominant Asset Theorem}; i.e.,
\[
\mathbb{E}\left[ \frac{1+X_i(k)}{1+X_j(k)}\right] \leq 1,
\] 
if it holds, then we set $K_j^* = 1$;
otherwise, $K_j^* = 0.$

\subsection{Bridging Theory and Practice}
 To implement the idea described above, we proceed as follows: Using $s_i(k)$ to denote the  $k$th daily \textit{realized prices} for the $i$th stock, we calculate the associated \textit{realized return}, call it $x_i(k)$, where
\[
x_i(k) := \frac{s_i(k+1)-s_i(k)}{s_i(k)}
\] for $i=1,2,\dots,m$. 
It should be noted that, in practice, the realized returns $x_i(k)$ are often nonstationary. Hence, when testing the dominant asset condition, we work with a sliding window consisting of the most recent~$M$ trading steps.\footnote{Again, we note here that the unit of ``steps" here can be any time stamp such as milliseconds, minutes, days, months, etc.}
That is, we estimate the expected ratio in the Dominant Asset Condition by 
\[
R_{ij}(k):= \frac{1}{M} \sum_{\ell=0}^{M-1}\frac{1+x_i(k-\ell)}{1+x_{j}(k-\ell)}.
\]
Then, if $R_{ij} \leq 1$ for all $i \neq j$, we set $K_j^*(k)=1$; otherwise, we set $K_j^*(k)=0.$ We call the procedure above the Dominant Ratio Trading Algorithm.
An illustrative example using historical prices data is provided in the next subsection to follow.
%The algorithm, which we call the Dominate Ratio Trading algorithm, is summarized below.

%\vspace{5mm}
%\begin{algorithm}\label{alg:Dominant Trading Algorithm}
%	\SetAlgoLined
%	\KwResult{Account value $V$ and feedback $K(k)$}
%	\textbf{Initialization}: Given parameters $M, r, V(0)$\;
%	Fetch the stock prices $s_i(k),\, \,  i=1,\dots,m$\;
%	Calculate the corresponding returns $x_i(k)$
%	\hspace{15mm}$x_i(k) \leftarrow {(s_i(k+1)-s_i(k))}/{s_i(k)}$\;
%	\For{each $k$}{
%%		\;
%\If{$k\geq M$}{
%	pick $j=1,\dots,m$\;
%		\eIf{$R_{ij}(k)\leq 1  \,\, \forall i$}{
%			$K_j(k)\leftarrow1$\; 
%			$K_i(k)\leftarrow0\, \, \forall i\neq j$\;
%%			instructions2\;
%		}{
%			$K_i(k)\leftarrow0$ \, $\forall\, i  \neq 1 $\;
%			$K_{1} \leftarrow 1$ (invest on riskless asset)
%		}
%		Calculate account value $V(k+1) \leftarrow V(k)+\sum_{i=1}^m K_i(k) x_i(k)V(k)$\;
%	}
%
%	}
%	\caption{Dominant Ratio Trading Algorithm}
%\end{algorithm}

%\vspace{3mm}
%\subsection{Single Stock with Risk-Less Asset}
%Consider a portfolio consisting of two assets: One is a risk-less asset with daily return $r=0.001$ and the other is a AMD stock (Ticker: AMD) from November 14, 2016 to November 13. 2017. Begin with initial account $V(0)=\$1$, we implement the dominate ratio algorithm.  Choose $\varepsilon=0.02$, we ran MATLAB script and plot the stock price $S_{AMD}(k)$, the account value $V(k)$, and the trading signal $K(k)$ in Figure~\ref{fig:goldenratiotest}.
%

%\vspace{5mm}
\subsection{Illustrative Example Via Back-Testing}
Consider a one-year long portfolio consisting of three assets with duration from February~14, 2019 to February~14,~2020: Vanguard Total World Stock Index Fund ETF Shares \mbox{(Ticker: VT)}, Vanguard Total Bond Market Index Fund ETF Shares (Ticker: BND), and Vanguard Total World Bond EFT \mbox{(Ticker: BNDX)} where the price trajectories are shown in Figure~\ref{fig:stockprices}.\footnote{The data are provided by Wharton Research Data Services. 
%	As another side remark, we indicate here that, in practice, many stockbrokerage firms offer the investing opportunities for the trader on these ETFs without transaction fee; hence, it is reasonable to \textit{not} consider its effect into the trading simulation.
%One may recongize that the three assets (VTI, BND, BNDX) are extremely diversified.
}  

\begin{figure}
	\centering
	\includegraphics[width=.9\linewidth]{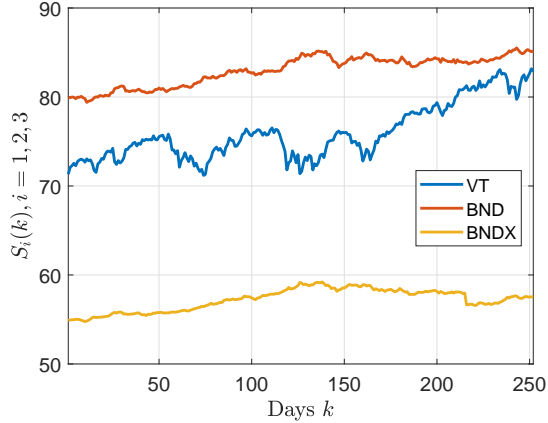}
	\caption{Daily Closing Stock Prices $S_i(k), i=1,2,3$ for VT, BND, and BNDX, respectively.}
	\label{fig:stockprices}
\end{figure}

Begin with initial account value $V(0)=\$1$, we implement the algorithm described above using a window size $M=20$ days. That is, the initial trade is triggered after receiving the first twenty daily prices data.
We ran MATLAB script and plot a typical trading performance in terms of the trajectory of account value~$V(k)$, which is shown in Figure~\ref{fig:goldenratiotest3assets}. 
%In particular,  the blue-colored trajectory is the account value obtained by Dominant Ratio Trading Algorithm and red-colored trajectory is the one obtained by a buy and hold strategy. 
In Figure~\ref{fig:goldenratiotest3assets}, we find that the account value obtained by Dominant Ratio Trading Algorithm is increasing from $V(0)=1$ to $V(252) \approx 1.23$, which yields a returns about $23\%$ and is obviously higher than the account value obtained by standard buy and hold strategy.
%While the goal of the algorithm is not about risk control, from the results, we still calculate the corresponding risk metric using maximum drawdown for the two strategies: The maximum drawdown for the Dominant Ratio Trading Algorithm is about~$0.0588$ and the drawdown for the buy and hold strategy is about $0.0145$. 
We also reported the corresponding trading signal~$K_i(k)$ for $i=1,2,3$ in Figure~\ref{fig:k1k2k3} where a flavor of bang-bang control is seen. 
To close this section, we also tested various sliding window sizes using equally-spaced $M=1,5,15,\dots,60$ with increment $5$ between elements and we seen that the algorithm produces similar trading performance to the one seen in Figure~\ref{fig:goldenratiotest3assets}.
This example provides a potential for bridging the theory and practice in stock trading.  Further developments along this line might be fruitful to pursue as a direction of future research. For example, an initial computational complexity analysis and trading with various stocks may be of the next interests to pursue.
% It would be also  to consider other type of estimator rather than~$R_{ij}(k)$ to estimate the dominant asset~ratio. 

\begin{figure}
	\centering
	\includegraphics[width=0.9\linewidth]{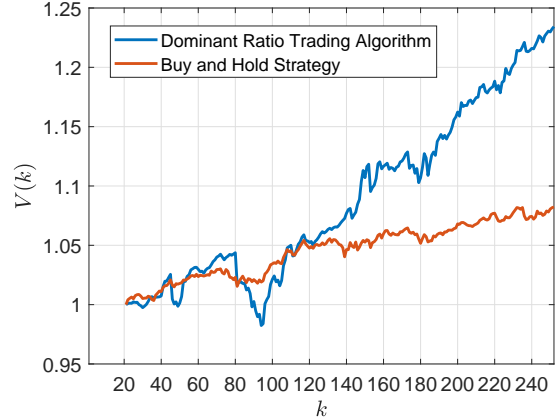}
	\caption{Trading Performance Comparison: Dominant Ratio Trading Algorithm with $M=20$ versus Buy and Hold strategy.}
	\label{fig:goldenratiotest3assets}
\end{figure}

%\begin{figure}
%	\centering
%	\includegraphics[width=0.9\linewidth]{figs/Golden_Ratio_test_VT_BND_BNDX.eps}
%	\caption{Trading Performance Comparison: Dominant Ratio Trading Algorithm with $M=20$ versus Buy and Hold strategy.}
%	\label{fig:goldenratiotest3assets}
%\end{figure}

\begin{figure}
	\centering
	\includegraphics[width=0.9\linewidth]{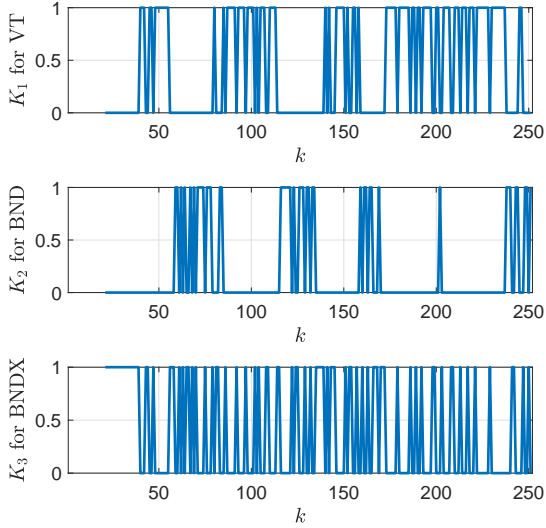}
	\caption{Feedback Gains $K_i(k)$ with $i=1,2,3$ for VT, BND, and BNDX, respectively. One sees a bang-bang flavored control signals.}
	\label{fig:k1k2k3}
\end{figure}

%\subsection{Modifications}
%Note that for any $K\in \mathcal{ K}$,
%\[
%\mathbb{E}\left[\frac{1+\mathcal{X}_{n,j}}{1+{K}^T\mathcal{X}_{n}}\right] \leq \mathbb{E}\left[\frac{1+\mathcal{X}_{n,j}}{1+{K^*}^T\mathcal{X}_{n}}\right]
%\]
%If one distrusts the data obtained from the sliding window, but still want to explore the advantages of the dominant asset. 

%\vspace{5mm}
\section{Conclusion and Future Work}	\label{SECTION: Conclusion}
In this paper, we studied necessary and sufficient conditions for the frequency-based optimal Kelly portfolio. 
With the aid of these conditions, we derived various different optimality characterizations such as expected ratio optimality, asymptotic relative optimality, and Extended Dominant Asset Theorem.
Moreover, to bridge the theory and practice, we used the notion of dominant asset to construct a trading algorithm which indicates the trader when to invest all available funds into the dominant asset. 

Regarding further research, one obvious continuation would be to study the case when $K_i<0$ is allowed; i.e., short selling should be considered as a next level extension of the formulation.  
In this situation, we envision a similar results along the lines of those given here.  
In addition, it would be of interest to relax some of the assumptions in the formulation from i.i.d. return sequences to time-dependent sequences. 

Finally, 
%it would be of interest  to develop results on our frequency-based trading which do not rely on knowledge of a perfect stochastic model for the returns $X_i(k)$.
for cases when the distribution model for returns~$X_i(k)$ is either partially known or completely unknown, it would be of interest to study the extent to which the theory in this paper can be extended. For example, the line along the data-driven algorithm described in Section~\ref{SECTION: Dominant Ratio Trading Algorithm} might be~helpful. 
%Some other type of estimator for the dominant ratio rather than~$R_{ij}(k)$ may be a good one for. 

%\vspace{5mm}
\section{ACKNOWLEDGMENTS}	
The author thanks Professor B. Ross Barmish and Professor John A. Gubner for leading the author into this field and seeing the potentiality and applicability of control theory. 

\appendices

%\vspace{5mm}
\section{Survival Considerations}
\label{SECTION: Surivial}
\vspace{0mm}
%
%We say that any feedback gain $K$ for a trading systems assures  \textit{all-time survivability} (i.e., no bankruptcy) if the following conditions hold: The account value $V(k,K,X)>0$ for all $k \geq 0$ and all sample paths of returns 
%$$
%X = (X(0),X(1),\dots,X(k-1)).
%$$ 
In the context of stock trading, the very first goal for a trader is to assure that the bankruptcy would never occur for the entire trading period; i.e., one must assure $V(k)>0$ for all~$k$. 
If this is the case, we say the trades are \textit{survival}.\footnote{
As stability is to the classical control system, so is survivability to the financial system.
 In fact, in our prior work \cite{Hsieh_Barmish_Gubner_2019_TAC}, the survivability problem is regarded  as a state positivity problem.} Below, we provide a result which indicates that the any feedback gain $K$ satisfying the constraint set $\mathcal{ K}$ considered in Section~\ref{SECTION: Problem Formulation} assures survival.

\vspace{5mm}\noindent
\textbf{Lemma:}\label{lemma: Survival for V(k+1)}
\textit{If $K \in \mathcal{ K}$, then
	$V(n)>0$ for all $n\geq 1$.}
%\end{lemma*}

\begin{proof} 
%	To prove part (a), letting $K \in \mathcal{ K}$, we must show $V(k,X,K) >  0$ for all~$k$ and all sample path of returns~$X$. Proceeds by induction. We first note that for $k=0$, the account value
%$
%V(0,X,K) = V(0)>0.
%$
%Now, assuming that $V(k,X,K) > 0$, we must show \mbox{$V(k+1,X,K) >  0$}.
%Observe that $V(k+1,X,K)$ is affine linear in $X(k)$, we have
%\begin{align*}
%	&V(k+1,X,K) = (1+K^TX(k))V(k,X,K)\\[.5ex]
%%				&\geq \min_z (1+K^Tz) V(k,X,K)\\
%				&\hspace{5mm} \geq \min_{z_i \in \{X_{i,\min},X_{i,\max}\}, i=1,\dots,m} \left(1+\sum_{j=1}^m K_jz_j \right)V(k,X,K)\\[.5ex]
%				&\hspace{5mm} \geq (1+X_{i,\min}\sum_{i=1}^m K_i )V(k,X,K)\\[.5ex]
%				&\hspace{5mm}=(1+X_{i,\min})V(k,X,K)\\[.5ex]
%				&\hspace{5mm} >0
%\end{align*}

%We now prove part~(b). 
We first note that for $n=1$, the account value is
\begin{align*}
	V(1) = (1+K^T\mathcal{X}_1)V(0) 
	= (1+K^TX(k))V(0)>0.
\end{align*}
	Now, to show \mbox{$V(n) >  0$} for $n>1$, we observe that 
	\begin{align*}
	V(n) &= (1+K^T\mathcal{X}_n)V(0)\\[.5ex]
	&= \left(1+ \sum_{i=1}^m K_i \left(\prod_{k=0}^{n-1}(1+X_i(k))-1\right) \right)V(0) \\[.5ex]
	& \geq  \left(1+ \sum_{i=1}^m K_i \mathcal{X}_{i,\min} \right)V(0)
	\end{align*}
	where $\mathcal{X}_{i,\min} := (1+X_{\min,i})^n-1 > -1$ for all $i,$ Hence,
	\begin{align*}
	V(n) & \geq \left(1+\min_{i=1,\dots,m}\mathcal{X}_{i,\min}  \sum_{j=1}^m K_j \right)V(0)\\[.5ex]
	& = \left(1+\min_{i=1,\dots,m}\mathcal{X}_{i,\min}  \right)V(0)\\[.5ex]
	&>0
	\end{align*}
where the last inequality holds since $\mathcal{X}_{i,\min}>-1$ for all $i$ implies $\min_i \mathcal{X}_{i,\min}>-1$
	and the proof is complete.
\end{proof}	

%\textbf{Remark:} 
	
%	\vspace{5mm}

\end{document}